\newcommand{\E}{\mathbb E}
\newcommand{\F}{\mathcal F}
\newcommand{\K}{\mathcal K}
\renewcommand{\L}{\mathcal L}
\newcommand{\N}{\mathcal N}
\renewcommand{\S}{\mathbb S}
\newcommand{\R}{\mathbb R}
\newcommand{\X}{\mathcal X}
\newcommand{\bmat}[1]{\begin{bmatrix} #1 \end{bmatrix}}
\DeclareMathOperator{\Tr}{Tr}
\DeclareMathOperator{\Diag}{Diag}
\DeclareMathOperator*{\argmin}{argmin}
\newcommand{\onote}[1]{{\color{black} #1}}
\newtheorem{theorem}{Theorem}[section]
\newtheorem{proposition}[theorem]{Proposition}
\newtheorem{lemma}[theorem]{Lemma}
\theoremstyle{definition}
\newtheorem{definition}[theorem]{Definition}
\newtheorem{assumption}[theorem]{Assumption}
\theoremstyle{remark}
\newcommand\fs@betterruled{%
  \def\@fs@cfont{\bfseries}\let\@fs@capt\floatc@ruled
  \def\@fs@pre{\vspace*{5pt}\hrule height.8pt depth0pt \kern2pt}%
  \def\@fs@post{\kern2pt\hrule\relax}%
  \def\@fs@mid{\kern2pt\hrule\kern2pt}%
  \let\@fs@iftopcapt\iftrue}
\title{\LARGE \bf
Learning Optimal Team-Decisions
}
\author{Olle Kjellqvist and Ather Gattami
\thanks{O. Kjellqvist is with the Department of Automatic Control, Lund University. A. Gattami is with AI Sweden, Stockholm, Sweden. This project has received funding from the European Research Council (ERC) under the European Union’s Horizon 2020 research and innovation programme under grant agreement No 834142
(ScalableControl).
\copyright 2022 IEEE. Personal use of this material is permitted. Permission from IEEE must be
obtained for all other uses, in any current or future media, including
reprinting/republishing this material for advertising or promotional purposes, creating new
collective works, for resale or redistribution to servers or lists, or reuse of any copyrighted
component of this work in other works.}
}
\begin{document}

\maketitle
\thispagestyle{empty} % Don't include page nums
\pagestyle{empty}

\begin{abstract}
    In this paper, we treat linear quadratic team decision problems, where a team of agents minimizes a convex quadratic cost function over $T$ time steps subject to possibly distinct linear measurements of the state of nature. We assume that the state of nature is a Gaussian random variable and that the agents do not know the cost function nor the linear functions mapping the state of nature to their measurements. We present a gradient-descent based algorithm with an expected regret of $O(\log(T))$ for full information gradient feedback and $O(\sqrt(T))$ for bandit feedback. In the case of bandit feedback, the expected regret has an additional multiplicative term $O(d)$ where $d$ reflects the number of learned parameters.
\end{abstract}

\section{Introduction}
\label{sec:intro}
Team decision problems originate from economics, where optimal decentralized decisions in organizations were studied in the papers by Marschak \cite{marschak}, and Radner \cite{radner} under stochastic settings. In these studies, the agents in the team \textit{know the problem parameters}. The agents use the information of the problem parameters to find the optimal decentralized decision. Decentralized decisions only depend on local measurements of the state of nature, where the measurements of the agents are typically not identical. Gattami \cite{Gattami2012Robust} studied linear quadratic robust team decision problems and showed that optimal decisions are linear and can be found by solving a convex (in fact, semi-definite) optimization problem.  
\onote{Team-decision theory has been helpful in understanding distributed control research~\cite{Mahajan2012Information}.
Witsenhausens famous counterexample~\cite{Witsenhausen1968Conterexample} established that linear decisions are not always optimal for distributed LQG problems and sparked an interest into research of team problems in the control community.
Ho and Chu~\cite{Ho1972Team} showed how linear-quadratic problems with partially nested information can be rewritten as static team-decision problems of the type in this paper and Witsenhausen showed that a general class of dynamic team decision problems can be reduced to static ones via a change of measures~\cite{Witsenhausen1988EquivalentSC}.
Static reductions for more exotic information structures is still an active research field~\cite{Gupta2014Existence, Sanjari2021Policy}.}

In this article, we study \emph{learning} of optimal decentralized decisions with linear information constraints and quadratic cost functions in the stochastic setting, \textit{without the knowledge of the problem parameters}. We consider learning with gradient feedback and bandit feedback. We study expected regret against the optimal policy in hindsight. Our key contributions are:
\begin{itemize}
    \item We propose a first and a zeroth-order algorithm to learn optimal decentralized decisions with linear information constraints and quadratic cost functions through repeated interactions.
    \item We extend the regret analysis of online gradient descent to the case with a possibly unbounded gradient oracle that has bounded second-moment.
    \item We show that our algorithms have expected regret bounded by $O(log(T))$ if the gradient is observed and $O(\sqrt{T})$ if only the incurred loss is observed in each step.
\end{itemize}

\subsection{Outline}
We give some background and establish notation in Section~\ref{sec:prelims} and formalize the learning problem in Section~\ref{sec:problem}. Section~\ref{sec:stoch} is devoted to properties of decentralized stochastic team-decision problems. Section~\ref{sec:learning} contains our extension to the regret analysis of online gradient descent and its application to the stochastic team-decision problem. In Section~\ref{sec:conclusions} we summarize our conclusions and give directions for future research. 

\section{Preliminaries}
\label{sec:prelims}
\subsection{Notation}
We denote the space of $n$-dimensional real-valued vectors by $\R^n$ and real-valued matrices with $m$ rows and $n$ columns by $\R^{m\times n}$. 
For a vector $x\in \R^n$, $\|x\|_2 = \sqrt{x^\top x}$ denotes the Euclidean norm and $A^\top$ denotes the transpose of a matrix $A$. $\Tr M$ denotes the trace of a square matrix $M$. For matrices $A, B \in \R^{m\times n}$, we denote the operator norm of $A$ as $\|A\|_2 = \max_{\|x\|_2 = 1}\|Ax\|_2$, the Frobenius inner product as $\langle A, B \rangle_F = \Tr A^\top B$, and the Frobenius norm as $\|A\|_F = \sqrt{\langle A, A \rangle_F}$. We denote the smallest singular value of a matrix $A \in \R^{m \times n}$ by $\sigma_\text{min}(A)$. The set of real-valued $n\times n$-dimensional symmetric matrices is denoted as $\S^n$. $\S^n_+$ and $\S^n_{++}$ refer to the sets of $n\times n$-dimensional of positive semi-definite and positive definite matrices, respectively. For a matrix $A \in \R^{(m_1 + \cdots + m_M)\times (n_1 + \cdots + n_N)}$, $[A]_i\in \R^{m_i\times (n_1 + \cdots + n_N)}$ denotes the $i$th block row and $[A]_{ij} \in \R^{m_i \times n_j}$ denotes the block element of A in position $(i, j)$. The matrix derivative of a differentiable function $f : \R^{m\times n} \to \R$ is denoted $\frac{\partial}{\partial X}f(X)$, where $\left[ \frac{\partial}{\partial X}f(X)\right]_{ij} = \partial f(X) / \partial X_{i,j}$. The projection of a variable $y \in \mathcal Y$ onto a set $\X \subseteq \mathcal Y$ is denoted by $\underset{\X}{\Pi}(y)$. 
$\L_2$ means the space of square-integrable random variables with the associated inner product $\langle \cdot, \cdot \rangle_{\L_2}$, and semi-norm $\|\cdot\|_{\L_2}$.
The set of Gaussian variables with mean $m$ and covariance $\Sigma$ is denoted $\N(m, \Sigma)$ and $\E[\cdot]$ denotes the expectation operator.
\subsection{Online Convex Optimization}
The online convex optimization setting is a repeated leader-follower game between a minimizing player and an adversary. At each time-step $t$, the minimizing player first decides $x_t$ from some compact convex set $\X$. The adversary then observes $x_t$ and selects a convex loss function $f_t$ that is uniformly bounded and has bounded gradients. The minimizing player pays $f_t(x_t)$ and learns the entire function $f_t$. The goal is to minimize the sum, $\sum_{t=1}^T f_t(x_t)$ over an arbitrary sequence of differentiable convex loss functions $f_1, f_2, \ldots, f_T$ with bounded derivatives.
\onote{
Recently, online convex optimization has seen an increasing number of applications across different fields including generator scheduling in smart grids~\cite{Narayanaswamy2012Online}, thermal management of multiprocessors~\cite{Zanini2010OCO}, demand steering via real-time electricity pricing~\cite{Kim2014Pricing} and on-policy learning of optimal control policies with linear dynamics~\cite{Li2021Online, Chen2021Blackbox, Hazan2020Nonstochastic, Cohen2018OnlineLQ}.
}
The performance measure is regret against the optimal policy in hindsight,
\[
    R(T) = \sum_{t=1}^T f_t(x_t) - \min_{x \in \X}\sum_{t=1}^T f_t(x).
\]

Online gradient descent, introduced by Zinkevich~\cite{Zinkevich2003Online}, is a simple, general yet efficient algorithm that applies to many online convex optimization problems and is given in Algorithm~\ref{alg:OGD}. Online gradient descent attains the asymptotic lower bounds $\Omega(DG\sqrt{T})$ and $O(\log T)$ for convex functions and $\alpha$-strongly convex functions, respectively. $D$ bounds the diameter of the feasible set, and $G$ bounds the norm of the gradient.

\begin{algorithm}[tb]
   \caption{Online Gradient Descent}
   \label{alg:OGD}
\begin{algorithmic}
    \STATE {\bfseries Input:} Convex set $\X$, $T$, $x_1 \in \X$ step-sizes $\{\eta_t\}$
    \FOR{$t=1$ {\bfseries to} $T$}
       \STATE Play $x_t$, observe $f_t$ and pay $f_t(x_t))$
       \STATE Update and project $x_{t+1} = \underset{\X}{\Pi}(x_t - \eta_t\nabla f_t(x_t))$
   \ENDFOR
\end{algorithmic}
\end{algorithm}

We will work with matrix-valued variables and strongly convex functions for the team decision problem, using the below definition of strong convexity.
\begin{definition}[Strong Convexity, matrix case]
    We say that the differentiable function $f : \X \subseteq \R^{m \times p} \to \R$ is \emph{strongly convex} with \emph{coefficient} $\alpha$ if for all $X,Y \in \X$,
    \[
    f(X) - f(Y) \leq \left \langle \frac{\partial}{\partial X} f(X), (X-Y) \right \rangle_F - \frac{\alpha}{2}\|x - y\|_F^2.
    \]
\end{definition}
An equivalent characterization is to require that the function $X \mapsto f(X) - \frac{\alpha}{2}\|X\|_F^2$ is convex, \cite{Bubeck2015Convex}. We refer the reader to \cite{Hazan2019Online} for more details on online convex optimization.

\subsection{Bandit and Zeroth-Order Optimization}
The minimizing player observes only the incurred cost $f_t(x_t)$ after each round in the bandit setting, rather than the gradient. This necessitates \emph{exploration} to learn properties of the loss functions, such as gradients, to accelerate optimization. Derivative-free methods have a long history in stochastic optimization. Tight convergence rates for strongly convex functions were obtained in~\cite{Rakhlin2011Gradient} in the first- and~\cite{Shamir2013complexity} in the zeroth-order setting. Bandit feedback was introduced to the online convex optimization setting in \cite{Flaxman2004online} where the authors used a one-point gradient estimate. Their method has asymptotic regret upper bounded by $O(T^{3/4})$. 

\subsection{Stochastic Team Decision theory}
The stochastic team-decision problem, is to solve
\begin{equation}
	\begin{aligned}
		\underset{\mu}{\text{minimize}} \quad &\E [\|z\|_2^2]\\
		\text{subject to:}\quad & z = Hx + Du\\
		& y_i = C_ix + v_i \\
		& u_i = \mu_i(y_i), \quad i = 1, \ldots, N.
	\end{aligned}
	\label{eq:stoch_prob}
\end{equation}

In \eqref{eq:stoch_prob}, $x \sim \N(0, V_{xx})$ and $v \sim \N(0, V_{vv})$ are independent Gaussian variables taking values in $\R^{n}$ and $\R^{p}$ respectively. 
$u_i \in \R^{m_i}$ denotes a player, and the players $u_1, \ldots, u_N$ make up a team.
The function $\mu(\cdot) : \R^p \to \R^m$ represents the decision function of the team, that is, $
    \mu(Cx) = \bmat{
    \mu_1(y_1)^\top &
    \cdots &
    \mu_1(y_N)^\top
    }^\top.
$
We further assume that $D^\top D \in \S^m_{++}$ where $m = m_1 + \cdots + m_N$. 
Radner~\cite{radner} showed that the optimal decision functions $\mu^\star_i$ are unique and linear in $y_i$. This motivates the search over linear policies in our problem set-up.
\begin{comment}
\begin{proposition}[]
\label{prop:linear}

Let $x$ and $v_i$ be independent Gaussian variables taking values in $\R^n$ and $\R^{p_i}$ with $x\sim \N(0, V_{xx})$ and $v \sim \N(0, V_{vv})$. 
Further, let $u_i$ be stochastic variables taking values in $\R^{m_i}$, $m = m_1 + \cdots + m_N$, $z$ a stochastic variable taking values in $\R^l$, $H \in \R^{l \times n}$, $D \in \R^{l \times m}$, $C_i \in \R^{p_i \times n}$ and $D^\top D \in \S^m_{++}$. 
Then the optimal solution $\mu_1^\star, \ldots, \mu_N^\star$ is unique and linear in $y$, that is, $\mu_i^\star(y_i) = K^\star_iy_i$.
%Furthermore, $K_1^\star, \ldots, K_N^\star$ is the solution %of the linear system
%\begin{multline}
%    \label{eq:opt_K}
%    \sum_{j=1}^N[D^\top D]_{ij}K_j\left(C_j V_{xx}C_i^\top + %[V_{vv}]_{ji} \right)  \\
%    = -[D^\top H]_i V_{xx} C_i^\top
%\end{multline}

\end{proposition}
\end{comment}

\section{Problem Formulation}
\label{sec:problem}
We aim to learn the optimal decision policy through repeated interactions with the environment. 
At each time-step $t$, each team-member will decide on a decision policy $ K_i^t$, receive a noisy partial observation of the system state, $y_i^t$, play the decision $u_i^t = K_i^t y_i^t$.
The team incurs the loss $l_t(K_t) = \|z_t\|_2^2$, generated by
\begin{equation}\label{eq:z}
\begin{aligned}
	z_t & = Hx_t + Du_t, & y^t_i & = C^t_ix + v^t_i \\
    u^t_i & = K_i^t y^t_i, & i = 1, \ldots, N.
\end{aligned}
\end{equation}
The objective is to minimize the sum of the losses, $J = \sum_{t = 1}^T l_t(K_t)$, while maintaining $K_t \in \K$, learning good policies locally.
$\K$ is the set of real-valued block-diagonal matrices of appropriate dimensions,
\begin{multline}
    \K := \{K :  K = \Diag(K_1, \ldots, K_N), K_i \in \R^{m_i \times p_i} \}.
    \label{eq:K}
\end{multline}

We summarize the interaction in Algorithm~\ref{alg:learn}.
Going forward we make the following assumptions.
\begin{assumption}
\label{ass:bounded_moments}
$x_t$ and $v_t$ have finite covariance matrices $\E [x_tx_t^\top] = V_{xx}$ and $\E [v_tv_t^\top] = V_{vv}$ and bounded fourth order moments so that $\E [(x_t^\top x_t)^2] \leq \kappa_x$ and $\E [(v_t^\top v_t)^2] \leq \kappa_v$.
\end{assumption}
\begin{assumption}
\label{ass:strongly_convex}
\[
    \sigma_\text{min}(D^\top D) (\sigma_\text{min}(CV_{xx}C^\top) + \sigma_\text{min}(V_{vv})) > 0.
\]
\end{assumption}

Assumption~\ref{ass:bounded_moments} is motivated by the fact that the variance of an estimator of the derivative $\frac{\partial}{\partial K}J(K)$ will contain fourth-order moments. Assumption \ref{ass:strongly_convex} is to the losses being strongly convex in expectation, which is summarized in Proposition~\ref{prop:strongly_convex}. 
Finally, we restrict our search to policies with an apriori supplied bound.
\begin{assumption}
    \label{ass:bk}
    A bound $b_K$ on $\|K\|_2$ is supplied by an oracle.
\end{assumption}
Let $K^\star$ be the best policy in hindsight,
\begin{equation}
    K^\star = \argmin_{K\in\K, \|K\|_2 \leq b_K}\sum_{t=1}^T l_t(K).
    \label{eq:Kstar}
\end{equation}
We measure performance as expected regret, 
\begin{equation}
    \E[{R}(T)] = \E\left[\sum_{t=1}^T l_t(K_t) - \sum_{t=1}^T l_t(K^\star) \right].
    \label{eq:regret}
\end{equation}
\begin{algorithm}[tb]
   \caption{ Learning with repeated interactions.}
   \label{alg:learn}
\begin{algorithmic}
    \FOR{$t=1$ {\bfseries to} $T$}
       \STATE Sample $x_t \sim \N(0, V_{xx})$ and $v_t \sim \N(0, V_{vv})$
        \STATE Agents $1, 2, ..., N$ observes $y^t_1, \ldots, y^t_N$ as in \eqref{eq:z}, respectively.
        \STATE The agents play $K^t_1, \ldots, K^t_N$, respectively, and incur a loss $ l_t(K_t) := \|z_t\|_2^2$, with $z_t$ as in \eqref{eq:z}.
        \STATE Each agent $i$ observes either
            \begin{itemize}
        \item the partial derivative, $\frac{\partial}{\partial K_i} (z_t)^\top z_t$, in the gradient-feedback setting,
        \item or the incurred loss, $\|z_t\|_2^2$, in the bandit-feedback setting
    \end{itemize}
    \STATE The agents update their policies $K_i^{t+1}$.
    \ENDFOR
\end{algorithmic}
\end{algorithm}

\section{Properties of Stochastic Team Decisions}
\label{sec:stoch}
The losses $l_t$ are differentiable with respect to $K_t$ everywhere. In particular, the derivative with respect to agent $i$ can be viewed as a product of the information available to the agent $y_i$, and their contribution to the overall cost, $[D^\top]_i z$.

\begin{proposition}
    \label{prop:dJ}
    $l_t$ is differentiable with respect to $K_i$ and the derivative is
    \[
        \frac{\partial}{\partial K_i} \E[l_t(K)] = \E[\left[2[D^\top]_i z_t(y^t_i)^\top \right].
    \]
\end{proposition}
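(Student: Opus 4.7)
The proof is essentially a direct calculation followed by a justification of differentiating under the expectation. My plan is the following.

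First, I would make the dependence of $z_t$ on $K$ completely explicit. Using the block-diagonal structure of $K\in\K$ together with the observations $y_t = [(y_1^t)^\top \cdots (y_N^t)^\top]^\top$, I can rewrite
\[
  z_t \;=\; Hx_t + DKy_t \;=\; Hx_t + \sum_{j=1}^N [D]_{(j)} K_j y_j^t,
\]
where $[D]_{(j)}$ denotes the $j$th block column of $D$, which equals $([D^\top]_j)^\top$ in the paper's notation. Then $l_t(K)=z_t^\top z_t$ is a (random) polynomial in the entries of $K$, hence pathwise smooth in $K$ for every realization of $(x_t,v_t)$.

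Second, I would compute the partial derivative pathwise. Only the term with index $j=i$ depends on $K_i$, so by applying the standard matrix identity $\partial\,\mathrm{tr}(A^\top B X C)/\partial X = A B^\top C^\top$ (or just by differentiating with respect to each scalar entry $(K_i)_{jk}$ and collecting) one obtains
\[
  \frac{\partial}{\partial K_i}\,\bigl(z_t^\top z_t\bigr) \;=\; 2\,[D^\top]_i\, z_t\,(y_i^t)^\top.
\]
This is a straightforward but slightly tedious matrix calculus exercise; no real difficulty here.

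Third, and this is the only step that requires care, I would justify interchanging $\partial/\partial K_i$ with $\E[\cdot]$. The standard sufficient condition is that the pathwise derivative is locally dominated by an integrable random variable uniformly in a neighborhood of $K_i$. Since $z_t$ is affine in $K_i$ and $y_i^t$ is independent of $K_i$, the Frobenius norm of $2[D^\top]_i z_t (y_i^t)^\top$ is bounded pathwise by $c_1\|y_i^t\|_2^2 + c_2\|x_t\|_2\|y_i^t\|_2$ for constants that can be taken uniform over any bounded neighborhood of $K_i$ (for example the operator-norm ball of radius $b_K+1$). By Cauchy--Schwarz and Assumption~\ref{ass:bounded_moments}, all such products have finite expectation, so dominated convergence justifies passing the derivative inside the expectation, giving
\[
  \frac{\partial}{\partial K_i}\,\E[l_t(K)] \;=\; \E\!\left[2\,[D^\top]_i\, z_t\,(y_i^t)^\top\right].
\]

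The only potential obstacle is the dominated-convergence step, but once Assumption~\ref{ass:bounded_moments} is invoked it is routine; the matrix-calculus computation itself is mechanical.
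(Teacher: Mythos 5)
Your proposal is correct and follows essentially the same route as the paper: compute the pathwise matrix derivative $\frac{\partial}{\partial K}(z_t^\top z_t) = 2D^\top z_t y_t^\top$, interchange differentiation and expectation by dominated convergence, and read off the $i$th diagonal block to obtain $2[D^\top]_i z_t (y_i^t)^\top$. Your treatment is simply more explicit than the paper's about the dominating integrable bound supplied by Assumption~\ref{ass:bounded_moments}, which is a welcome but inessential refinement.
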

\begin{proof}
By dominated convergence, we can exchange expectation and differentiation.\footnote{We drop the time-index for readability}
\begin{align*}
    \frac{\partial}{\partial K} \E [z^\top z]  &= \E\left[ \frac{\partial}{\partial K}(Hx + DKy)^\top (Hx + DKy)\right] \\
    & = \E\left[2D^\top zy^\top \right].
\end{align*}

%Apply the following two identities from %\cite{Petersen2008Matrix}
%\begin{equation}
%    \begin{aligned}
%		\frac{\partial a^\top X b}{\partial X} &= ab^\top, \quad
%		\frac{\partial b^\top X^\top Y X c}{\partial X} &= Y^\top X b c^\top + YXcb^\top. 
%	\end{aligned}
%	\label{eq:peter}
%\end{equation}
%to get $\partial z^\top z/\partial K = 2 D^\top zy^\top$.
%where we used \eqref{eq:peter} to obtain the second line. To complete the proof, consider
%\[
%        \frac{\partial}{\partial K_i}z^\top z = \left [ \frac{\partial}{\partial K}z^\top z\right]_{ii} = 2[D^\top]_i z y_i^\top.
%\]

Identifying the local components $\partial / \partial K_i$ completes the proof.
 \end{proof}

%We first show that the optimal solution $K^\star$ to Problem~\ref{prob:stoch_linear} is bounded. The trivial decision, $K = 0$, is a feasible point. 
%The fact that the optimal value, $J(K^\star)$, should not be larger than $J(0)$ gives the following bound on $K^\star$.
%\begin{proposition}
%The optimal solution $K^\star$ to Problem~\ref{prob:stoch_linear} is bounded, that is $\|K^\star\|_F \leq b_K$. In particular we can take $b_K$ to be
%\label{prop:stoch_bound_K}
%\begin{equation}
%    b_K = 2\sqrt{
%    \frac{\|H\|_2^2\Tr V_{xx}} 
%    { \sigma_\text{min}(D^\top D) (\sigma_\text{min}(CV_{xx}C^\top) + \sigma_\text{min}(V_{vv}))}
%    }
%    \label{eq:stoch_bound_K}
%\end{equation}
%\end{proposition}
%
%The term $\sigma_{min}(D^\top D)$ in the denominator of \eqref{eq:stoch_bound_K} reflects that if $D$ is close to singular, changes in certain directions to $K$ can have a very small effect on $J$.
%The same is true if elements of $y$ are highly correlated, which is captured by the term $\sigma_\text{min}(CV_{xx}C^\top) + \sigma_\text{min}(V_{vv})$. 
The phenomenon that certain large changes to the optimization variable can have (almost) negligible effects on the value can make optimization difficult. The right way to quantify this effect on convergence is through \emph{strong convexity}, a property we can exploit to get better regret bounds in online convex optimization~\cite{Hazan2019Online}. In our regret terms, a lower bound on the strong convexity parameter will show up directly as a divisor. The following proposition shows that $\E [l_t]$ is strongly convex as a function of $K$. 

\begin{proposition}
\label{prop:strongly_convex}
$\E [l_t]$ is $\alpha$-strongly convex with constant
\[
    \alpha = 2\sigma_\text{min}(D^\top D) (\sigma_\text{min}(CV_{xx}C^\top) + \sigma_\text{min}(V_{vv})).
\]
\end{proposition}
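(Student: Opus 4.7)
The plan is to compute $\E[l_t(K)]$ explicitly, isolate its quadratic part in $K$, and obtain a lower bound on the Hessian that matches the claimed $\alpha$. Writing $y = Cx + v$ and expanding, only the term $\E[y^\top K^\top D^\top D K y] = \Tr(K^\top D^\top D K V_{yy})$ is quadratic in $K$, where
\[
V_{yy} = \E[yy^\top] = C V_{xx} C^\top + V_{vv}
\]
(using independence of $x$ and $v$). The terms that are constant or linear in $K$ do not affect strong convexity, so it suffices to show that the Hessian of the quadratic part in any direction $\Delta K$ (in particular any block-diagonal $\Delta K \in \K$) is at least $\alpha \|\Delta K\|_F^2$; since $\K$ is a linear subspace of $\R^{m\times p}$, any lower bound on the Hessian on the full space implies the same bound restricted to $\K$.

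Next, I would invoke the equivalent characterization of strong convexity (the function $K \mapsto \E[l_t(K)] - \tfrac{\alpha}{2}\|K\|_F^2$ being convex). Because $\E[l_t]$ is quadratic in $K$, this reduces to the bilinear inequality
\[
2\,\Tr\!\bigl(\Delta K^\top D^\top D\, \Delta K\, V_{yy}\bigr) \;\geq\; \alpha \,\|\Delta K\|_F^2 \qquad \forall\, \Delta K.
\]
To establish this, I would use the trace inequality that for any matrix $A$ and symmetric positive semi-definite matrices $M, N$,
\[
\Tr(A^\top M A N) \geq \sigma_\text{min}(M)\,\sigma_\text{min}(N)\,\|A\|_F^2,
\]
which follows by writing $M - \sigma_\text{min}(M) I \succeq 0$ (so that $A^\top M A - \sigma_\text{min}(M) A^\top A \succeq 0$) and then using that the trace of a product of two positive semi-definite matrices is non-negative, applied with the second factor $N$; iterating gives the stated bound. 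Setting $M = D^\top D$, $N = V_{yy}$, $A = \Delta K$ produces a constant $2\sigma_\text{min}(D^\top D)\,\sigma_\text{min}(V_{yy})$.

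The main step where care is needed is converting $\sigma_\text{min}(V_{yy}) = \sigma_\text{min}(C V_{xx} C^\top + V_{vv})$ into the \emph{sum} $\sigma_\text{min}(C V_{xx} C^\top) + \sigma_\text{min}(V_{vv})$ appearing in the proposition: a naive bound would just keep $\sigma_\text{min}(V_{yy})$, which is \emph{larger} than needed but does not exactly match the statement. I would close this gap with Weyl's inequality for symmetric matrices, which gives
\[
\lambda_\text{min}(C V_{xx} C^\top + V_{vv}) \;\geq\; \lambda_\text{min}(C V_{xx} C^\top) + \lambda_\text{min}(V_{vv}),
\]
and since both summands are positive semi-definite, $\lambda_\text{min} = \sigma_\text{min}$ for each. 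Substituting into the previous display produces precisely $\alpha = 2\sigma_\text{min}(D^\top D)\bigl(\sigma_\text{min}(C V_{xx} C^\top) + \sigma_\text{min}(V_{vv})\bigr)$, completing the proof. The only real obstacle is chaining the PSD trace inequality with Weyl correctly; everything else is routine quadratic expansion.
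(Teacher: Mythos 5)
Your proof is correct and follows essentially the same route as the paper: both verify the equivalent characterization that $K \mapsto \E[l_t](K) - \tfrac{\alpha}{2}\|K\|_F^2$ is convex by lower-bounding the quadratic part $\E\|DKy\|_2^2$ via $\sigma_\text{min}(D^\top D)$ and the smallest singular values of the noise covariances. The only cosmetic difference is in the last step: the paper splits $\E\|Ky\|_2^2 = \E\|KCx\|_2^2 + \E\|Kv\|_2^2$ directly using independence of $x$ and $v$ and bounds each term, whereas you keep $V_{yy} = CV_{xx}C^\top + V_{vv}$ together and invoke Weyl's inequality $\lambda_\text{min}(CV_{xx}C^\top + V_{vv}) \geq \lambda_\text{min}(CV_{xx}C^\top) + \lambda_\text{min}(V_{vv})$ — these are equivalent ways of arriving at the same constant.
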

\begin{proof}
    We will verify that $\E [l_t](K) - \frac{\alpha}{2}\|K\|^2_F$ is convex. $\E [l_t]$ is a quadratic function of $K$ and
    \[
    \begin{aligned}
        \E [l_t](K) %& = \|Hx + DKy\|^2_{L_2} \\
            & = \|Hx\|_{\L_2}^2 + 2\langle Hx, DKy \rangle_{\L_2} + \|DKy\|^2_{\L_2}.
        \end{aligned}
    \]
    
    %A quadratic function is convex if and only if it's quadratic term is positive semi-definite. 
    %We need to show $\|DKy\|^2_{\L_2} \geq \frac{\alpha}{2}\|K\|^2_F$. Consider
    Which is convex if and only if $\|DKy\|^2_{\L_2} \geq \frac{\alpha}{2}\|K\|^2_F$. Consider,
    \begin{align*}
        \|DKy\|^2_{\L_2} & \geq \sigma_\text{min}(D^\top D)\|KCx + Kv\|^2_{\L_2}
                       % & = \sigma_\text{min}(D^\top D)\left( \|KCx\|^2_{\L_2} + \|Kv\|^2_{\L_2} \right) \\
                        %& = \sigma_\text{min}(D^\top D)\left( \|KC\sqrt{V_{xx}}\|^2_F + \|K\sqrt{V_{vv}}\|^2_F\right) \\
                        \geq \frac{\alpha}{2} \|K\|_F^2
    \end{align*}
\end{proof}
To apply online optimization algorithms to learn the optimal policy through repeated play, we must bound the second and fourth moments of $z$ as we must bound the variance of our derivative estimates. 
We get the following bounds on the second and fourth order moments of $z$ by Assumptions~\ref{ass:bounded_moments} and \ref{ass:bk}.  

\begin{proposition}
For $\|K\|_2 \leq b_K$, the loss $l_t(K)$ in Algorithm~\ref{alg:learn} is bounded from above in expectation, $\E[l_t](K) \leq b_l$, where
\[
    b_l = (\|H\|_2 + \|D\|_2\|C\|_2b_K)^2 \Tr V_{xx} + \|D\|_2^2 b_K^2 \Tr V_{vv}.
\]

Furthermore, $\E[(z_t^\top z_t)^2 \leq \kappa_z]$ where 
\begin{multline}
    \kappa_z = \left(\|H\|_2 +\|D\|_2\|C\|_2b_K + \|D\|_2 b_K \right)^4 \\
    \times \left(\kappa_x + \Tr V_{xx} \Tr V_{vv} + \kappa_v \right)
\end{multline}
\label{prop:stoch_bound_J}
\end{proposition}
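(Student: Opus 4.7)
The plan is to decompose $z_t = Ax_t + Bv_t$ with $A = H + DKC$ and $B = DK$, so that by sub-multiplicativity of the operator norm and Assumption~\ref{ass:bk} we have $\|A\|_2 \leq \alpha := \|H\|_2 + \|D\|_2\|C\|_2 b_K$ and $\|B\|_2 \leq \beta := \|D\|_2 b_K$. Both bounds in the proposition will then follow from standard moment computations using independence and zero means of $x_t$ and $v_t$.

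For the second moment, I would note that the independence and zero-mean properties kill the cross term $\E[x_t^\top A^\top B v_t] = 0$, leaving $\E[l_t(K)] = \Tr(A V_{xx} A^\top) + \Tr(B V_{vv} B^\top)$. The inequality $\Tr(MVM^\top) \leq \|M\|_2^2 \Tr V$ for PSD $V$ (proved from $\Tr(VM^\top M) \leq \lambda_{\max}(M^\top M)\Tr V$) together with the bounds on $\|A\|_2$ and $\|B\|_2$ yields $\E[l_t(K)] \leq \alpha^2 \Tr V_{xx} + \beta^2 \Tr V_{vv} = b_l$.

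For the fourth moment I would expand $z_t^\top z_t = P + 2Q + R$ with $P = x_t^\top A^\top A x_t$, $Q = x_t^\top A^\top B v_t$, $R = v_t^\top B^\top B v_t$, then write
\[
\E[(z_t^\top z_t)^2] = \E[P^2] + 4\E[Q^2] + \E[R^2] + 4\E[PQ] + 2\E[PR] + 4\E[QR].
\]
The terms $\E[PQ]$ and $\E[QR]$ vanish because they contain an odd power of one of the independent zero-mean variables. For the remaining four terms I would use the operator-norm bounds $|x^\top M v| \leq \|M\|_2 \|x\|_2\|v\|_2$ and $y^\top M^\top M y \leq \|M\|_2^2 \|y\|_2^2$ together with Assumption~\ref{ass:bounded_moments} to obtain $\E[P^2] \leq \alpha^4 \kappa_x$, $\E[R^2] \leq \beta^4 \kappa_v$, $\E[PR] = \E[P]\E[R] \leq \alpha^2\beta^2\Tr V_{xx}\Tr V_{vv}$, and $\E[Q^2] \leq \alpha^2\beta^2 \Tr V_{xx}\Tr V_{vv}$. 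Summing gives
\[
\E[(z_t^\top z_t)^2] \leq \alpha^4 \kappa_x + 6\alpha^2\beta^2 \Tr V_{xx}\Tr V_{vv} + \beta^4 \kappa_v.
\]

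To finish, I would observe that the multinomial expansion of $(\alpha + \beta)^4(\kappa_x + \Tr V_{xx}\Tr V_{vv} + \kappa_v)$ already contains the three monomials $\alpha^4 \kappa_x$, $6\alpha^2\beta^2\Tr V_{xx}\Tr V_{vv}$, and $\beta^4 \kappa_v$ with non-negative coefficients, so the desired $\kappa_z$ bound follows by dropping the remaining non-negative terms. The main obstacle I anticipate is the cross term $\E[Q^2]$: a direct trace estimate $\Tr(V_{xx}A^\top B V_{vv} B^\top A)$ would produce a tighter but more cumbersome bound, and using the crude $|x^\top Mv|$ inequality is what lets the excess factor of $6$ get absorbed neatly into the clean $(\alpha+\beta)^4$ prefactor.
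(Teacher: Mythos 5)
Your proof is correct, and the first claim is handled essentially as in the paper (independence kills the cross term, then $\Tr(MVM^\top)\le\|M\|_2^2\Tr V$). For the fourth moment, however, you take a genuinely different route. The paper bounds everything in one stroke: writing $z=\bmat{H+DKC & DK}\bmat{x\\ v}$, it pulls out $\left\|\bmat{H+DKC & DK}\right\|_2^4$ and is left with $\E[(x^\top x+v^\top v)^2]$, which it expands using independence. You instead expand $(P+2Q+R)^2$ term by term and bound each of the six pieces separately. Your way is more laborious, but it buys something concrete: the block-matrix argument, carried through honestly, yields the factor $\E[(x^\top x+v^\top v)^2]\le\kappa_x+2\Tr V_{xx}\Tr V_{vv}+\kappa_v$ (the constant with the factor of $2$ that the paper indeed uses later in Lemma~\ref{lemma:stoch_bound_Gtilde}), which is strictly larger than the factor $\kappa_x+\Tr V_{xx}\Tr V_{vv}+\kappa_v$ displayed in the proposition. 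Your term-by-term accounting shows that the cross terms only ever pick up the coefficient $6\alpha^2\beta^2\le(\alpha+\beta)^4$ on $\Tr V_{xx}\Tr V_{vv}$, so you actually establish the proposition with the constant as literally stated (and in fact the tighter intermediate bound $\alpha^4\kappa_x+6\alpha^2\beta^2\Tr V_{xx}\Tr V_{vv}+\beta^4\kappa_v$). All individual steps check out: $\E[PQ]=\E[QR]=0$ by zero means and independence, $\E[PR]=\E[P]\E[R]$ by independence, and the operator-norm estimates on $P$, $Q$, $R$ are standard.
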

\begin{proof}[Proof of Proposition~\ref{prop:stoch_bound_J}]
We start with bounding the value function. Let $\|\cdot \|_{\L_2}$ be the $\L_2$ norm, then 
\begin{align*}
\E[l](K)  &= \|(H + DKC)x + DKv\|_{L_2}^2 \\
&= \|(H + DKC)x\|_{L_2}^2 + \|DKv\|_{L_2}^2,
\end{align*}
as $x$ and $v$ are independent. By the triangle inequality
\[
    \|(H + DKC)x\|_{L_2}^2 \leq (\|H\|_2 + \|D\|_2\|K\|_2\|C\|_2)^2\|x\|^2_{\L_2}.
\]
Treating the term $\|DKv\|_{\L_2}$ similarly and substituting $\|K\|_2 \leq b_K$ and $\|x\|^2_{\L_2} = \Tr V_{xx}$ completes the proof. To prove the second claim, consider 
\begin{align*}
    \E [(z^\top z)^2] & = \E \left[\left \|\bmat{H + DKC & DK} \bmat{x \\v}\right \|_2^4\right] \\
                    & \leq \left \|\bmat{H + DKC & DK} \right \|_2^4 \E [(x^\top x + v^\top v)^2] \\
                    & \leq \left(\|H\|_2 +\|D\|_2\|C\|_2b_K + \|D\|_2 b_K \right)^4 \\
    & \quad \times \left(\kappa_x + \Tr V_{xx} \Tr V_{vv} + \kappa_v \right)
\end{align*}
\end{proof}

\section{Learning Optimal Team Decisions}
\label{sec:learning}
This section describes how to learn the optimal team decision policies using online gradient descent.
Due to the stochastic nature of our problem, we cannot hope to bound the objective function or the gradient for an arbitrary realization. We will modify the analysis to give results when these properties hold in expectation. This means our guarantees hold in expectation and are well suited to analyze stochastic problems. We summarize the upper bound for expected regret for strongly convex functions in Theorem~\ref{thm:GD} The bound is what one would expect; the standard result \cite[Theorem 3.3]{Hazan2019Online} for strongly convex functions holds in expectation against an adaptive adversary.
\begin{theorem}
\label{thm:GD}
    Let $l_1, \ldots, l_T$ be independent random functions $l_t : R^{m \times n} \to \R $ such that $\E [l_t]$ is $\alpha$-strongly convex for all $t = 1, \ldots, T$. Let $\tilde \nabla_t$ be a derivative oracle that is \textbf{consistent} $\E[ \tilde \nabla_t] = \frac{\partial}{\partial K}\E [l_t(K)]$ and has \textbf{bounded variance} $\E \left[ \|\tilde \nabla_t\|_F^2\right] \leq (b_t)^2$ for all $K \in \mathcal{K}$, where $\K$ is convex and compact. Set the step size $\eta_t = \frac{1}{\alpha t}$. Let $K^\star = \argmin_{K\in \K}\sum_{t=1}^T l_t(K)$. Online Gradient Descent, Algorithm~\ref{alg:OGD}, has expected regret
    \begin{equation}
        \E\left[\sum_{t=1}^T\left(l_t(K_t) - l_t(K^\star) \right) \right] \leq\frac{1}{2}\sum_{t=1}^T \frac{b_t^2}{\alpha t}.
    \end{equation}
\end{theorem}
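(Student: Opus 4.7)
The plan is to follow the textbook analysis of online gradient descent for $\alpha$-strongly convex losses, but wrap each step in a conditional expectation so that the stochastic gradient $\tilde\nabla_t$ can be replaced by $\nabla\E[l_t](K_t)$ where its mean is needed, while the quadratic term in $\tilde\nabla_t$ is controlled by the bounded-second-moment assumption. Starting from the non-expansiveness of the Euclidean projection onto $\K$ and the update $K_{t+1}=\underset{\K}{\Pi}(K_t-\eta_t\tilde\nabla_t)$, one obtains the pointwise inequality
\[
\|K_{t+1}-K^\star\|_F^2 \leq \|K_t-K^\star\|_F^2 - 2\eta_t\langle\tilde\nabla_t,K_t-K^\star\rangle_F + \eta_t^2\|\tilde\nabla_t\|_F^2,
\]
which isolates $\langle\tilde\nabla_t,K_t-K^\star\rangle_F$ after dividing by $2\eta_t$.

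Next I would set $g_t:=\E[l_t]$ and let $\F_{t-1}$ be the $\sigma$-algebra generated by $l_1,\ldots,l_{t-1}$. Independence of $l_t$ from $\F_{t-1}$ together with $\F_{t-1}$-measurability of $K_t$ turns consistency into $\E[\tilde\nabla_t\mid\F_{t-1}]=\nabla g_t(K_t)$ and the bounded-second-moment hypothesis into $\E[\|\tilde\nabla_t\|_F^2\mid\F_{t-1}]\leq b_t^2$. Applying strong convexity of $g_t$ at the (random but $\F_{t-1}$-measurable) point $K_t$,
\[
g_t(K_t)-g_t(K^\star) \leq \langle\nabla g_t(K_t),K_t-K^\star\rangle_F - \frac{\alpha}{2}\|K_t-K^\star\|_F^2,
\]
combining with the rearranged projection bound and taking full expectations yields, with $D_t:=\E[\|K_t-K^\star\|_F^2]$, the per-step inequality
\[
\E[g_t(K_t)]-g_t(K^\star) \leq \frac{D_t-D_{t+1}}{2\eta_t} - \frac{\alpha D_t}{2} + \frac{\eta_t b_t^2}{2}.
\]

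Summing this over $t=1,\ldots,T$, substituting $\eta_t=1/(\alpha t)$ and telescoping via the Abel-type identity $\sum_{t=1}^T t(D_t-D_{t+1})=\sum_{t=1}^T D_t-T D_{T+1}\leq\sum_{t=1}^T D_t$, the resulting $\frac{\alpha}{2}\sum_t D_t$ contribution exactly cancels the $-\frac{\alpha}{2}\sum_t D_t$ coming from strong convexity, leaving only $\sum_{t=1}^T b_t^2/(2\alpha t)$. A final use of independence gives $\E[l_t(K_t)]=\E[g_t(K_t)]$ and $\E[l_t(K^\star)]=g_t(K^\star)$ for any fixed comparator $K^\star\in\K$, completing the argument.

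The main obstacle is purely bookkeeping the probabilistic conditioning: one must verify that consistency genuinely pulls the $\F_{t-1}$-measurable factor $K_t$ outside the conditional expectation, that the bounded-second-moment bound survives conditioning, and that strong convexity is invoked on the deterministic function $g_t=\E[l_t]$ rather than on a random realization of $l_t$. Once these points are handled, the remainder reduces to the well-known telescoping computation carried through in expectation.
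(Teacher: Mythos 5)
Your proposal is correct and follows essentially the same route as the paper: non-expansiveness of the projection, consistency and bounded second moment of the oracle invoked under conditioning on the past, strong convexity of $\E[l_t]$ at the $\F_{t-1}$-measurable iterate, and the standard telescoping cancellation with $\eta_t = 1/(\alpha t)$. The only cosmetic difference is that you telescope the deterministic sequence $D_t=\E[\|K_t-K^\star\|_F^2]$ via an Abel summation, whereas the paper cancels via $1/\eta_t - 1/\eta_{t-1} = \alpha$; these are the same computation.
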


The proof follows the outline in~\cite{Hazan2019Online}, but involves some extra bookkeeping:
\begin{proof}
Let $\F_t = \sigma(J_1, \ldots, J_{t-1})$. Then $K_t$ is a stochastic sequence adapted to $\F_t$. Define for simplicity $\nabla_t = \frac{\partial}{\partial K} \E [l_t(K_t)]$. By strong convexity
\[
2\E\left[l_t(K_t) - l_t(K^\star) | \F_t\right] \leq 2 \langle \nabla_t, K_t - K^\star \rangle_F - \alpha \|K^\star - K_t\|_F^2.
\]
To bound $\langle \nabla_t, K_t - K^\star \rangle_F$, consider
\begin{align*}
    &\E\left[\|K_{t+1} - K^\star\|_F^2 |\F_t\right]  = \E\left[\|\underset{\K}{\Pi}(K_t - \eta_t\tilde\nabla_t) - K^\star\|_F^2 |\F_t\right] \\
    %&    \quad \leq \E\left[\|(K_t - \eta_t\tilde\nabla_t) - K^\star\|_F^2 |\F_t\right] \\
    %&    \quad = \|K_t - K^\star\|_F^2 + \E\left[\eta_t^2\|\tilde \nabla_t\|_F^2 |\F_t\right]  \\
    %& \qquad \quad - 2 \eta_t\langle\E[\tilde \nabla_t|\F_t], K_t - K^\star \rangle_F\\
    &    \quad \leq \|K_t - K^\star\|_F^2 + \eta_t^2 b_t^2 - 2 \eta_t\langle\nabla_t, K_t - K^\star \rangle_F.
\end{align*}
Taking $\eta_t = \frac{1}{\alpha t}$ and defining $\frac{1}{\eta_0} = 0$, we get $
    2\E\left[\sum_{t=1}^T\left(l_t(K_t) - l_t(K^\star) \right) \right] \leq \sum_{t=1}^T\frac{b_t^2}{\alpha t} $.
    %&\quad \leq \E\left[\sum_{t=1}^T \|K_t - K^\star \|_F^2\left(\frac{1}{\eta_t} - \frac{1}{\eta_{t-1}} - \alpha \right) + \sum_{t=1}^T %\eta_t b_t^2 \right]\label{eq:strong_cvx_bound}  \\
        %&\quad  = \sum_{t=1}^T\frac{b_t^2}{\alpha t}\nonumber
%
\end{proof}

We are now ready to apply online gradient descent to learn distributed team decisions.
\subsection{Learning Team Decisions with Partial Gradient Information}
We assume that the designer is aware of a lower bound on the strong convexity parameter, $\lambda$, and upper bound on the operator norm of the optimal policy $b_K$. The resulting algorithm, Algorithm~\ref{alg:fg_feedback}, is a direct extension of online gradient descent. Its behavior is summarized in Theorem~\ref{thm:fg_feedback}.
\begin{algorithm}[tb]
   \caption{Learning with partial gradient information}
   \label{alg:fg_feedback}
\begin{algorithmic}
   \STATE {\bfseries Input:} initial guess $K_0$, bound $b_{K}$, step-sizes $\{\eta_t\}$
   \STATE Each agent plays $u^t_i = K^t_i y^t_i$
   \STATE The team incurs cost $l_t(K_t) = z_t^\top z_t$
   \FOR{$t=0$ {\bfseries to} $T-1$}
       \FOR{$i = 1$ {\bfseries to} $N$}
       \STATE Observe the partial gradient $G^t_i = 2 D_i^\top z_t(y_i^t)^\top$
       \STATE Update $L^{t+1}_i  = K_i^t - \eta_t G^t_i$
       \IF{$\|L_i^{t+1}\|_2 > b_{K}$}
        \STATE  $K_i^{t+1} = L_i^{t+1}/b_{K}$
        \ELSE 
        \STATE$K_i^{t+1} = L_i^{t+1}$
        \ENDIF
       \ENDFOR
   \ENDFOR
\end{algorithmic}
\end{algorithm}

\begin{theorem}[Partial-Gradient Feedback]
\label{thm:fg_feedback}
 Assume that Assumptions \ref{ass:bounded_moments}, \ref{ass:strongly_convex} and \ref{ass:bk} hold. Then, Algorithm~\ref{alg:fg_feedback} with step-size $\eta_t = \frac{1}{\lambda t}$ for any $0 < \lambda \leq \alpha$, where $\alpha$ is the strong-convexity parameter in Proposition~\ref{prop:strongly_convex}, has bounded expected regret against the optimal policy $K^\star$ defined in \eqref{eq:Kstar}. The bound is given by
\begin{equation}
    \E[R(T)] = \sum_{t=1}^T \E[l_t(K_t) - l_t(K^\star)] \leq \frac{b_G^2}{2\lambda}(1 + \log(T)).
    \label{eq:fg_regret}
\end{equation}

The constant $b_G$ in \eqref{eq:fg_regret} is given by
\begin{multline*}
    b^2_G = 4\|D\|_2^2(\|H\|_2 + b_K\|D\|_2(\|C\|_2  + 1))^2(\|C\|_2 + 1)^2\\
    (\kappa_x + 2 \Tr V_{xx} \Tr V_{vv} + \kappa_v)
\end{multline*}
\end{theorem}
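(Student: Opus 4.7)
The plan is to cast Algorithm~\ref{alg:fg_feedback} as a specialization of online gradient descent whose regret is controlled by Theorem~\ref{thm:GD}. Under Proposition~\ref{prop:strongly_convex}, $\E[l_t]$ is $\alpha$-strongly convex with $\alpha > 0$ by Assumption~\ref{ass:strongly_convex}; since $\lambda \le \alpha$, the loss is also $\lambda$-strongly convex, so the proof of Theorem~\ref{thm:GD} carries through verbatim with $\alpha$ replaced by $\lambda$ and the step size $\eta_t = 1/(\lambda t)$ (the telescoping still collapses because $1/\eta_t - 1/\eta_{t-1} = \lambda$). The feasible set is $\K \cap \{K: \|K\|_2 \le b_K\}$, which is convex and compact, and Algorithm~\ref{alg:fg_feedback}'s rescaling step is exactly Euclidean projection onto this set.

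What remains is to check the two oracle hypotheses of Theorem~\ref{thm:GD} for the per-agent gradient estimate $G^t_i = 2[D^\top]_i z_t (y^t_i)^\top$. \textbf{Consistency} is immediate from Proposition~\ref{prop:dJ}, stacking the block rows recovers $\E[G^t] = \frac{\partial}{\partial K}\E[l_t(K_t)]$. \textbf{Bounded variance} is the computational core: I would bound $\|G^t\|_F^2 \le 4\|D\|_2^2 \, \|z_t\|_2^2 \, \|y_t\|_2^2$ using that $\|z y^\top\|_F^2 = (z^\top z)(y^\top y)$, and then write the pair $(z_t, y_t)$ as a linear map of the augmented Gaussian $w_t = [x_t^\top, v_t^\top]^\top$. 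Specifically, $z_t = [H + DK_tC\ \ DK_t]w_t$ and $y_t = [C\ \ I]w_t$, which gives the operator-norm bounds
\[
\|z_t\|_2 \le \bigl(\|H\|_2 + b_K\|D\|_2(\|C\|_2+1)\bigr)\|w_t\|_2,\qquad
\|y_t\|_2 \le (\|C\|_2+1)\|w_t\|_2,
\]
after using $\|K_t\|_2 \le b_K$ (Assumption~\ref{ass:bk}) and the submultiplicative/triangle inequalities. Multiplying these and taking expectations reduces the task to bounding $\E[\|w_t\|_2^4] = \E[(\|x_t\|_2^2 + \|v_t\|_2^2)^2]$, which, by independence of $x_t, v_t$ and Assumption~\ref{ass:bounded_moments}, is at most $\kappa_x + 2\,\Tr V_{xx}\,\Tr V_{vv} + \kappa_v$. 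Assembling these factors yields exactly the stated $b_G^2$.

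With $\E[\|G^t\|_F^2] \le b_G^2$ established and $\lambda$-strong convexity in hand, applying Theorem~\ref{thm:GD} gives
\[
\E[R(T)] \le \frac{1}{2}\sum_{t=1}^T \frac{b_G^2}{\lambda t} \le \frac{b_G^2}{2\lambda}\bigl(1 + \log T\bigr),
\]
using $\sum_{t=1}^T 1/t \le 1 + \log T$.

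The main obstacle is bookkeeping rather than any new idea: choosing the right grouping of terms in the operator-norm estimate so that the resulting constant has the compact form $(\|H\|_2 + b_K\|D\|_2(\|C\|_2+1))^2 (\|C\|_2 + 1)^2$ stated in the theorem, rather than a looser expansion. Writing $z_t$ and $y_t$ as linear functions of the stacked $w_t$ (instead of bounding $Hx$, $DKCx$, $DKv$ separately, as in Proposition~\ref{prop:stoch_bound_J}) is what makes the $(\|C\|_2+1)$ factor appear naturally and keeps the dependence on $\|K\|_2$ cleanly absorbed into $b_K$.
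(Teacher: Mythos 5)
Your proof is correct and follows essentially the same route as the paper: the paper likewise reduces the theorem to Theorem~\ref{thm:GD}, taking consistency from Proposition~\ref{prop:dJ} and the variance bound from Lemma~\ref{lemma:stoch_bound_dJ}, and that lemma's proof uses the identical stacked decomposition $z = \bmat{H+DKC & DK}w$, $y = \bmat{C & I}w$ with $\|zy^\top\|_F^2 = (z^\top z)(y^\top y)$ and $\E[(x^\top x + v^\top v)^2] \leq \kappa_x + 2\Tr V_{xx}\Tr V_{vv} + \kappa_v$ to obtain the same constant $b_G^2$. The one quibble is your assertion that the rescaling $L_i^{t+1}/b_K$ is ``exactly Euclidean projection'' onto the feasible set --- as written it is neither a Frobenius-metric projection onto the operator-norm ball nor even guaranteed to produce a feasible point --- but the paper's own proof silently makes the same identification, so this is a shared, not an introduced, gap.
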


The regret bound is equivalent to that of online gradient descent in the convex optimization setting, where $b_G$ takes the place of the bound on the gradient. The difference is that the bound holds in expectation and that $b_G^2$ is a bound on the second moment of the gradient estimator. Before proving Theorem~\ref{thm:fg_feedback} we need the following lemma to characterize the gradient estimate.
\begin{lemma}
\label{lemma:stoch_bound_dJ}
For $\|K\|_2 \leq b_K$, the gradient estimate $G^t_i := 2 [D^\top]_i z_t (y_i^t)^\top$ is \textbf{consistent:} $\E [G^t_i] = \frac{\partial}{\partial K_i}\E[l_t(K)]$, and has \textbf{bounded variance}: $\E \left[\|G_t\|^2_F\right] \leq b^2_G$, where $G_t = \Diag(G_1^t, \ldots, G_N^t)$ and $b_G$ satisfies
\begin{multline}
    b^2_G = 4\|D\|_2^2(\|H\|_2 +  b_K\|D\|_2(\|C\|_2 + 1))^2(\|C\|_2 + 1)^2\\
    (\kappa_x + 2 \Tr V_{xx} \Tr V_{vv} + \kappa_v).
\end{multline}
\end{lemma}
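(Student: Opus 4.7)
The consistency claim is immediate: Proposition~\ref{prop:dJ} already gives $\frac{\partial}{\partial K_i}\E[l_t(K)] = \E\bigl[2[D^\top]_i z_t (y_i^t)^\top\bigr] = \E[G_i^t]$, so there is nothing more to check. All the work lies in bounding the second moment $\E[\|G_t\|_F^2]$.

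My plan is to exploit the rank-one structure of each block $G_i^t = 2[D^\top]_i z_t (y_i^t)^\top$, which is an outer product and hence satisfies $\|G_i^t\|_F^2 = 4\,\|[D^\top]_i z_t\|_2^2\,\|y_i^t\|_2^2$. Since $G_t = \Diag(G_1^t,\dots,G_N^t)$, its Frobenius norm splits as a sum of block Frobenius norms. Bounding $\|y_i^t\|_2^2 \leq \|y^t\|_2^2$ uniformly in $i$ and using $\sum_i \|[D^\top]_i z_t\|_2^2 = \|D^\top z_t\|_2^2 \leq \|D\|_2^2\,\|z_t\|_2^2$, I obtain the pointwise bound
\[
\|G_t\|_F^2 \leq 4\|D\|_2^2\,\|z_t\|_2^2\,\|y^t\|_2^2.
\]

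Next I bound $\|z_t\|_2$ and $\|y^t\|_2$ pointwise in terms of $\|[x_t;v_t]\|_2$. Writing $z_t = \bmat{H+DKC & DK}\bmat{x_t\\ v_t}$ and $y^t = \bmat{C & I}\bmat{x_t \\ v_t}$, together with the submultiplicative bound $\|\bmat{A & B}\|_2 \leq \|A\|_2 + \|B\|_2$ (which follows from the triangle inequality) and the assumed bound $\|K\|_2 \leq b_K$, one has
\[
\|z_t\|_2 \leq \bigl(\|H\|_2 + b_K\|D\|_2(\|C\|_2+1)\bigr)\|[x_t;v_t]\|_2,
\]
and $\|y^t\|_2 \leq (\|C\|_2+1)\|[x_t;v_t]\|_2$. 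Multiplying and substituting into the previous display collapses everything into a multiple of $\|[x_t;v_t]\|_2^4$ with exactly the constants in the claimed $b_G^2$.

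All that remains is to take expectation and control $\E\bigl[\|[x_t;v_t]\|_2^4\bigr] = \E\bigl[(x_t^\top x_t + v_t^\top v_t)^2\bigr]$. Expanding the square and using independence of $x_t$ and $v_t$ together with Assumption~\ref{ass:bounded_moments} yields $\E[\|x_t\|_2^4] + 2\,\E[\|x_t\|_2^2]\E[\|v_t\|_2^2] + \E[\|v_t\|_2^4] \leq \kappa_x + 2\Tr V_{xx}\Tr V_{vv} + \kappa_v$. Plugging this in produces the stated value of $b_G^2$. The main thing to watch out for is to carry out the operator-norm bounds on $\|z_t\|_2$ and $\|y^t\|_2$ pointwise \emph{before} taking expectation; applying Cauchy–Schwarz on $\E[\|z_t\|_2^2\|y^t\|_2^2]$ first would introduce square roots of $\kappa_z$ and match neither the form nor the tightness of the claim.
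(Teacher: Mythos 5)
Your proposal is correct and follows essentially the same route as the paper: consistency from Proposition~\ref{prop:dJ}, then a pointwise bound on $\|G_t\|_F^2$ via the rank-one/block structure (the paper phrases this as $\|G_t\|_F^2 \leq \|2D^\top z_t y_t^\top\|_F^2$, which yields the identical bound $4\|D\|_2^2\|z_t\|_2^2\|y_t\|_2^2$), followed by the same operator-norm estimates on $\|z_t\|_2$ and $\|y_t\|_2$ and the same expansion of $\E[(x_t^\top x_t + v_t^\top v_t)^2]$. Your closing caution about bounding pointwise before taking expectations is exactly what the paper does.
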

\begin{comment}
\begin{proof}
Consistency Follows from Proposition~\ref{prop:dJ}. We need to prove bounded variance. Note that since $G$ consists of the block-diagonal entries of $2D^\top z y^\top$, we have that $\E\left[ \|G\|^2_F\right]  \leq \E \left[\|2D^\top z y^\top\|_F^2\right]$. Further,
 \begin{align*}
    \|z y^\top \|_F^2 & = \left\|\bmat{H + DKC & DK}\bmat{x\\v}\bmat{x\\v}^\top \bmat{C & I}^\top \right \|_F^2 \\
    & \leq (\|H\|_2 + \|D\|_2\|K\|_2\|C\|_2 + \|D\|_2 \|K\|_2)^2 \\
    & \quad \times (\|C\|_2 + 1)^2 (x^\top x + v^\top v)^2.
 \end{align*}
 
 To complete the proof, substitute into $\E \left[\|2D^\top zy^\top \|_F^2\right]$ the expression for $\|zy^\top\|_F^2$ and 
 \[
    \E [(x^\top x + v^\top v)^2] = \kappa_X + 2\Tr V_{xx} \Tr V_{vv} + \kappa_V.
 \]
\end{proof}
\end{comment}
\begin{proof}[Proof of Theorem~\ref{thm:fg_feedback}.]
Since all agents have the same loss functions, the partial gradient update is equivalent to a full gradient update. The result thus follows directly from Theorem~\ref{thm:GD} with the covariance-bounded gradient oracle in Lemma~\ref{lemma:stoch_bound_dJ} and the strong convexity coefficient from Proposition~\ref{prop:strongly_convex}. 
\end{proof}

\subsection{Learning Team Decisions with Bandit Feedback}
Towards constructing an estimator for the derivative, in addition to requiring the estimate to be consistent and have bounded variance, we insist that each agent must be able to compute her estimate independently. The last requirement invalidates the one-point estimate used in \cite{Flaxman2004online} as sampling from the unit sphere would require communication between agents. In \cite{Shamir2013complexity}, the authors found that sampling uniformly and independently from the unit hypercube leads to consistent and bounded estimators for quadratic problems. Sampling from the hypercube reduces to sampling independent Rademacher variables coordinate-wise and can be done in a distributed fashion. Algorithm~\ref{alg:bandit_feedback} is constructed by applying a matrix version of the estimate from \cite{Shamir2013complexity} and shrinking the exploration parameter $\epsilon_t$ each time-step. The regret properties of Algorithm~\ref{alg:bandit_feedback} is summarized in Theorem~\ref{thm:bandit_feedback}.

\begin{algorithm}[tb]
   \caption{Learning with bandit feedback}
   \label{alg:bandit_feedback}
\begin{algorithmic}
   \STATE {\bfseries Input:} initial guess $K_0$, bound $b_{K}$, step-sizes $\{\eta_t\}$ and exploration parameters $\{\epsilon_t\}$
   \FOR{$t=1$ {\bfseries to} $T$}
       \FOR{$i = 1$ {\bfseries to} $N$}
       \STATE Sample $R_i^t \in \{-1, 1\}^{m_i \times p_i} \sim \text{Uniform}$
       \STATE Play $u^t_i = (K^t_i + R^t_i\mathcal E^t_i) y^t_i$ where $\mathcal E^t_i = \frac{\epsilon_t}{\sqrt{m_i p_i}}$
    \ENDFOR
    \STATE The team incurs loss $l_t(K_t + R^t\mathcal E^t) = z_t^\top z_t$
    \FOR{$i = 1$ {\bfseries to} $N$} 
       \STATE Observe loss $l_t = z_t^\top z_t$ 
       \STATE Set gradient estimate $\tilde G^t_i = l_t R^t_i (\mathcal{E}^t_i)^{-1}$
       \STATE Update $L^{t+1}_i  = K^t_i - \eta_t \tilde G^t_i$
       \IF{$\|L_i^{t+1}\|_2 > b_{K}$}
        \STATE  $K_i^{t+1} = L_i^{t+1}/b_{K}$
        \ELSE 
        \STATE$K_i^{t+1} = L_i^{t+1}$
        \ENDIF
       \ENDFOR
   \ENDFOR
\end{algorithmic}
\end{algorithm} 

\begin{theorem}[Bandit Feedback]
\label{thm:bandit_feedback}
Assume that Assumptions \ref{ass:bounded_moments}, \ref{ass:strongly_convex} and \ref{ass:bk} hold. Then, Algorithm~\ref{alg:bandit_feedback} with step-sizes $\eta_t = \frac{1}{\lambda t}$ for any $0 < \lambda \leq \alpha$ where $\alpha$ is the strong-convexity parameter in Proposition~\ref{prop:strongly_convex}, and exploration parameters $\epsilon_t = t^{-1/4} \left( \sum_{i=1}^N m_i^2p_i^2 \right)^{-1/4} $ has bounded expected regret against the optimal policy $K^\star$ defined in \eqref{eq:Kstar}. The bound is given by
\begin{multline}
    \sum_{t=1}^T\E[l_t(\tilde K_t) - l_t(K^\star)] \leq \\
    2\left( M_1 + \frac{M_2}{\lambda}\right)\left(\sum_{i=1}^N m_i^2p_i^2\right)^{1/2}\sqrt{T}.
    \label{eq:bandit_regret}
\end{multline}

In \eqref{eq:bandit_regret} $\tilde K_t = K_t + R_t \mathcal E_t$ is the policy played by the agents at time-step $t$. The problem-dependent constants $M_1$ and $M_2$ are given by
\begin{align*}
M_1 & = \|D\|_2^2\left(\|C\|_2^2\Tr V_{xx} + \Tr V_{vv}\right) \\
M_2 & = \left(\|H\|_2 + \|D\|_2(b_K + 1)(\|C\|_2 + 1)\right)^4 \\
    & \quad  \times (\kappa_x + 2 \Tr V_{xx} \Tr V_{vv} + \kappa_v).
\end{align*}
\end{theorem}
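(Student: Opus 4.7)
The plan is to mimic the proof of Theorem~\ref{thm:fg_feedback} by applying Theorem~\ref{thm:GD} to the virtual sequence of pre-perturbation iterates $\{K_t\}$ with the Shamir-style estimator $\tilde G_t$, and to charge the deviation between $K_t$ and the actually played policy $\tilde K_t = K_t + R_t\mathcal E_t$ to a separate exploration term. Concretely, I would start from the decomposition
\begin{equation*}
\sum_{t=1}^T \E[l_t(\tilde K_t) - l_t(K^\star)] = \sum_{t=1}^T \E[l_t(\tilde K_t) - l_t(K_t)] + \sum_{t=1}^T \E[l_t(K_t) - l_t(K^\star)].
\end{equation*}

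The exploration term is handled directly by exploiting that $l_t$ is quadratic in $K$: with $\Delta_t = D R_t \mathcal E_t y_t^t$, one has $l_t(\tilde K_t) - l_t(K_t) = 2\langle z_t(K_t), \Delta_t\rangle + \|\Delta_t\|_2^2$. The cross term vanishes in expectation because $R_t$ is zero-mean and independent of $(x_t, v_t, K_t)$, and the quadratic term is controlled by the block-diagonal structure $\|R_t \mathcal E_t\|_2 \le \max_i \mathcal E_i^t \sqrt{m_i p_i} = \epsilon_t$. Combining with $\E[\|y_t^t\|_2^2] \le \|C\|_2^2 \Tr V_{xx} + \Tr V_{vv}$ yields the clean bound $\E[l_t(\tilde K_t) - l_t(K_t)] \le M_1 \epsilon_t^2$.

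For the second sum I would invoke Theorem~\ref{thm:GD} on $\{K_t\}$ with the oracle $\tilde G_t$. This reduces to verifying consistency and a second-moment bound. Consistency, $\E[\tilde G_t \mid \F_t] = \tfrac{\partial}{\partial K}\E[l_t](K_t)$, follows from expanding the quadratic $l_t(\tilde K_t)$ as a polynomial of degree two in the block Rademacher matrix $R_t$: the degree-zero and degree-two parts vanish after multiplication by $R_i^t/\mathcal E_i^t$ because of the vanishing first and third moments of independent Rademacher entries, while the linear part contributes the correct partial derivative block-by-block via $\E[\langle g_i, R_i^t\rangle_F R_i^t] = g_i$ for any fixed $g_i$. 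For the variance I use that $\|R_i^t\|_F^2 = m_i p_i$ deterministically and $\|\tilde G_t\|_F^2 = l_t^2 \sum_i (m_i p_i)^2/\epsilon_t^2$; Proposition~\ref{prop:stoch_bound_J}, applied with $b_K$ replaced by $b_K + 1$ (since $\|\tilde K_t\|_2 \le b_K + \epsilon_t \le b_K + 1$ for the admissible range of $t$), supplies $\E[l_t^2 \mid \F_t] \le M_2$. Strong convexity with parameter $\alpha \ge \lambda$ is already given by Proposition~\ref{prop:strongly_convex}.

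Putting the pieces together, Theorem~\ref{thm:GD} bounds the optimization regret by $\tfrac{M_2 \sum_i m_i^2 p_i^2}{2\lambda}\sum_t \tfrac{1}{\epsilon_t^2\, t}$, and the exploration regret is at most $M_1\sum_t \epsilon_t^2$. Per-step balancing via a choice of $\epsilon_t = \Theta(t^{-1/4})$ with the dimension-dependent scaling prescribed in the theorem makes each summand proportional to $t^{-1/2}$, so $\sum_{t=1}^T t^{-1/2} \le 2\sqrt{T}$ delivers the $\sqrt{T}$ rate and the stated dependence on $\sum_i m_i^2 p_i^2$. The delicate step, in my view, is the consistency computation in the multi-block setting: one must carefully track cross-block Rademacher moments and verify that all odd-order cross terms vanish so that the quadratic tail of the expansion of $l_t(\tilde K_t)$ does not bias $\tilde G_t$. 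Everything else is bookkeeping in the constants $M_1$ and $M_2$ and an application of the already-established Theorem~\ref{thm:GD}.
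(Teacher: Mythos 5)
Your proposal follows essentially the same route as the paper: the same decomposition into an exploration penalty $\sum_t\E[l_t(\tilde K_t)-l_t(K_t)]\le M_1\sum_t\epsilon_t^2$ (cross term killed by zero-mean $R_t$, quadratic term by $\|R_i\mathcal E_i\|_2\le\epsilon_t$) plus an optimization term handled by Theorem~\ref{thm:GD} applied to $\{K_t\}$ with the Shamir-type estimator, whose consistency and second-moment bound are exactly the content of Lemma~\ref{lemma:stoch_bound_Gtilde}. One caveat: in the last step you assert that the prescribed $\epsilon_t = t^{-1/4}(\sum_i m_i^2p_i^2)^{-1/4}$ ``delivers the stated dependence on $\sum_i m_i^2p_i^2$,'' but carrying out the substitution gives $\tilde b_G^2/( \lambda t\,)\propto M_2(\sum_i m_i^2p_i^2)^{3/2}t^{-1/2}/\lambda$, i.e.\ a $(\sum_i m_i^2p_i^2)^{3/2}$ dependence on the dominant term rather than the claimed $(\sum_i m_i^2p_i^2)^{1/2}$; the balance producing the stated bound requires $\epsilon_t = t^{-1/4}(\sum_i m_i^2p_i^2)^{+1/4}$. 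This appears to be a sign error in the paper's exponent that its own terse final sentence also glosses over, so your argument is no less complete than the original, but the unchecked assertion is the one genuine soft spot in the write-up.
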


To prove \ref{thm:bandit_feedback} we need the following lemma, which states that the gradient estimator used in Algorithm~\ref{alg:bandit_feedback} is consistent and has bounded variance.
\begin{lemma}[Variant of lemma 2 in \cite{Shamir2013complexity}]
    \label{lemma:stoch_bound_Gtilde}
	Let $R_i \in \{-1, 1\}^{m_i \times p_i}$ be independent random variables following uniform distributions. Let $R = \Diag\{R_1, R_2, \ldots, R_N\}$, $\mathcal E = \epsilon\Diag\{I/\sqrt{m_1p_1}, I/\sqrt{m_2p_2}, \ldots, I/\sqrt{m_Np_N}\}$. Define the zeroth-order gradient estimator
	\[
		\tilde G^t_i := l_t\left(K + R\mathcal E\right) R_i\mathcal E^{-1}_i,
	\]
	and let $\tilde G_t := \Diag\{\tilde G^t_1,\ldots, \tilde G^t_N\}$. Under assumptions~\ref{ass:bounded_moments}--\ref{ass:bk},  $\tilde G_t$ is \textbf{consistent:} $\E_{R, x, v} \left[\tilde G^t_i\right] = \frac{\partial}{\partial K_i} J(K)$ and has \textbf{bounded variance} $\E_{R, x, v}\left[ \|\tilde G_t\|_F^2\right] \leq \tilde b^2_G$. The bound, $\tilde b_G$ can be taken as
		\begin{multline}
            \tilde b^2_G = \left(\|H\|_2 + \|D\|_2(b_K + \epsilon))(\|C\|_2 + 1)\right)^4 \\
            \times (\kappa_x + 2 \Tr V_{xx} \Tr V_{vv} + \kappa_v)\sum_{i=1}^N m_i^2p_i^2 / \epsilon^2.
		\end{multline}
\end{lemma}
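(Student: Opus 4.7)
The plan is to split the lemma into its two claims and handle each by leveraging the quadratic structure of $l_t$ in its matrix argument together with elementary moment identities for Rademacher variables. For consistency, I would expand $l_t(K + R\mathcal{E})$ as a second-order polynomial in the perturbation $R\mathcal{E}$; because $z_t$ is affine in $K$ and $l_t = z_t^\top z_t$, the expansion has only a constant, a linear, and a quadratic term in $R\mathcal{E}$. Multiplying by $R_i \mathcal{E}_i^{-1}$ and taking expectation over $R$ conditional on $x_t, v_t$, the constant term vanishes since $\E[R_i]=0$, and the quadratic term vanishes by the sign-flip symmetry $R \mapsto -R$: this substitution leaves the quadratic part of $l_t$ invariant while flipping the sign of $R_i\mathcal{E}_i^{-1}$, so the product has zero mean. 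For the linear term, the identity $\E[(R_j)_{\gamma\delta}(R_i)_{\alpha\beta}] = \delta_{ij}\delta_{\gamma\alpha}\delta_{\delta\beta}$, together with $\mathcal{E}_j\mathcal{E}_i^{-1} = I$ when $i = j$, exactly recovers $\partial l_t(K)/\partial K_i$. Commuting expectation with differentiation as in the proof of Proposition~\ref{prop:dJ} then yields $\E[\tilde G_i^t] = \partial \E[l_t(K)]/\partial K_i$.

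For the bounded-variance claim, I observe that $\tilde G_t$ is the block-diagonal matrix with blocks $l_t(K + R\mathcal{E}) R_i \mathcal{E}_i^{-1}$, so
\begin{equation*}
\|\tilde G_t\|_F^2 = l_t(K + R\mathcal{E})^2 \sum_{i=1}^N \|R_i \mathcal{E}_i^{-1}\|_F^2 = l_t(K + R\mathcal{E})^2 \frac{\sum_{i=1}^N m_i^2 p_i^2}{\epsilon^2},
\end{equation*}
since $\mathcal{E}_i^{-1} = (\sqrt{m_i p_i}/\epsilon) I$ and $\|R_i\|_F^2 = m_i p_i$ deterministically. I would then bound $\E[l_t(K + R\mathcal{E})^2]$ by conditioning on $R$ and invoking Proposition~\ref{prop:stoch_bound_J} with $b_K$ replaced by $b_K + \epsilon$; this is justified because $\|R_i\mathcal{E}_i\|_2 \leq (\epsilon/\sqrt{m_i p_i})\|R_i\|_F = \epsilon$, hence $\|K + R\mathcal{E}\|_2 \leq b_K + \epsilon$ almost surely. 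The conditional bound is $R$-free, so taking expectation over $R$ leaves it unchanged and produces the stated $\tilde b_G^2$.

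The \emph{main technical obstacle} is the consistency step: one must verify that the third-moment terms arising from the quadratic part of $l_t$ actually vanish, which requires the sign-flip symmetry argument rather than pairwise independence of entries of $R$, and that the block-diagonal scaling $R_i\mathcal{E}_i^{-1}$ cancels the perturbation scaling $R_j\mathcal{E}_j$ exactly when $i=j$. Once consistency is established, the variance bound reduces to a routine moment computation that reuses Proposition~\ref{prop:stoch_bound_J}, so no new machinery is needed.
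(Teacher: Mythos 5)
Your proposal is correct and follows essentially the same route as the paper's proof: expand $l_t(K+R\mathcal E)$ into terms of degree $0$, $1$, $2$ in $R$, kill the degree-$0$ and degree-$2$ contributions to $\E[\,\cdot\,R_i\mathcal E_i^{-1}]$, recover the gradient from the degree-$1$ term via the second-moment identity for Rademacher entries, and for the variance use $\|R_i\mathcal E_i^{-1}\|_F^2 = m_i^2p_i^2/\epsilon^2$ together with Proposition~\ref{prop:stoch_bound_J} applied with $\|K+R\mathcal E\|_2\le b_K+\epsilon$. The only cosmetic difference is that you dispose of the cubic-in-$R$ term by the sign-flip symmetry $R\mapsto -R$ rather than by the explicit vanishing-third-moment property of Lemma~\ref{lemma:R}, which is an equally valid (and arguably cleaner) argument.
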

\begin{comment}
\begin{proof}
    \textbf{Consistency:} Consider
    \begin{multline*}
        \E_{R, x ,v} \left[\tilde G_i\right] % = \E_{R, x ,v} \left[\left\|Hx + D(K + R\mathcal E)y \right\|_2^2R_i\mathcal E_i^{-1}\right] \\
                                 = \E_{R, x,v} \Big[\|(Hx + DKy)\|_2^2R_i\mathcal E_i^{-1} \\
                                \quad + 2 (Hx + DKy)^\top DR\mathcal E y R_i\mathcal E_i^{-1}
                                 + \|DR\mathcal Ey\|_2^2R_i\mathcal E_i^{-1}\Big].
    \end{multline*}
    Using the first three properties of Lemma~\ref{lemma:R}, we conclude that only the term that is quadratic in $R$ is nonzero. Using the cyclic property of the trace operator and property 4 of Lemma~\ref{lemma:R} we get
%\begin{align*}
%\E_{R, x ,v}[\tilde G_i] & = \E_{x, v} \left[\E_R\left[ 2\Tr\left(\mathcal E y (Hx + DKy)^\top D\right) R_i \mathcal E_i\right]\right]\\
%                        & = \E_{x, v}\left[[D^\top (Hx + DKy)y^\top]_i \mathcal E_i \mathcal E_i^{-1}\right]\\
%                        & = \E_{x, v}\left[[D^\top (Hx + DKy)y^\top]_i\right]\\
%                        & = \frac{\partial}{\partial K_i}J(K).
%\end{align*}
\[
\E_{R, x ,v}[\tilde G_i] = \E_{x, v}\left[[D^\top (Hx + DKy)y^\top]_i\right] = \frac{\partial}{\partial K_i}J(K).
\]
\textbf{Bounded Variance:}
We first bound the perturbed gain matrix, then plug that directly into the bound on the fourth order moment in Proposition~\ref{prop:stoch_bound_J}. Consider
\begin{align*}
    \|K_i + R_i \mathcal E_i \|_2 & \leq \|K_i\|_2 + \|R_i \mathcal E_i \|_2 \\
    & \leq b_K + \epsilon \frac{\sqrt{m_i p_i}}{\sqrt{m_i p_i}} = b_k + \epsilon,
\end{align*}
where we used the bound on $\|R_i\|_2$ from property 5 in Lemma~\ref{lemma:R}. Let $\hat b_K = b_K + \epsilon$, then
%\begin{align*}
%    \E_{R, x, v}\left[\|\tilde G\|_F^2\right] & = \E_{x, v}[(z^\top z)^2]\E_R\left[\|R\mathcal E^{-1}\|^2_F\right] \\
%                & \leq \left(\|H\|_2 +\|D\|_2\|C\|_2\hat b_K + \|D\|_2 \hat b_K \right)^4 \\
%    & \ \times \left(\kappa_x + \Tr V_{xx} \Tr V_{vv} + \kappa_v \right)\sum_{i=1}^N m_i^2p_i^2 / \epsilon^2.
%\end{align*}
\[
    \E_{R, x, v}\left[\|\tilde G\|_F^2\right] = \E_{x, v}[(z^\top z)^2]\E_R\left[\|R\mathcal E^{-1}\|^2_F\right] \leq \tilde b_G
\]
\end{proof}
\end{comment}
Note that the bound $\tilde b_G$ is decreasing in the exploration parameter $\epsilon_t$, leading to an exploration/exploitation trade-off. The choice of $\epsilon_t$ minimizes the regret asymptotic upper bound. 
\begin{proof}[Proof of Theorem~\ref{thm:bandit_feedback}.]
We will first quantify the added loss due to the perturbation term $R_t\mathcal E_t$. Let $\tilde K_t = K_t + R_t\mathcal E_t$, then 
\begin{align*}
    \E [l_t(\tilde K_t)] & = \E\left[ \|Hx_t + D(K_t + R_t\mathcal E_t)Cy_t \|_2^2\right] \\
                    & = \E\Big[\|Hx_t + DKy_t\|_2^2 + \|D R_t\mathcal E_t y_t\|_2^2\\ 
                    & \quad + 2(Hx_t + DKy_t)^\top R_t\mathcal E_t y_t \Big].
\end{align*}
By the first property of Lemma~\ref{lemma:R} in the Appendix, we know that $\E [R_t] = 0$. Applying the fifth property we conclude that
\begin{multline*}
    \E [l_t(\tilde K_t) ]\leq \E[l_t(K_t)]  + \epsilon_t^2\|D\|_2^2\left(\|C\|_2^2\Tr V_{xx} + \Tr V_{vv}\right).
\end{multline*}
Combining this with Lemma~\ref{thm:GD}, we get
\begin{multline*}
    \sum_{t=1}^T \E[J(\tilde K_t) - J(K^\star)]  \leq \sum_{t=1}^T  \frac{(\tilde b_G^t)^2}{\lambda t} + M_1\sum_{t=1}^T \epsilon_t^2.
\end{multline*}
%Substitute $\epsilon_t$ into $\tilde b_G^t$ from Lemma~\ref{lemma:stoch_bound_Gtilde}, to bound the right-hand side from above by
%\begin{multline*}
%    \left(\|H\|_2 + \|D\|_2(b_K + 1)(\|C\|_2 + 1)\right)^4/\lambda \\
%            \times (\kappa_x + 2 \Tr V_{xx} \Tr V_{vv} + \kappa_v)\left(\sum_{i=1}^N m_i^2p_i^2 \right)^{1/2}\sum_{t=1}^T \frac{1}{\sqrt{t}} \\
%            + \|D\|_2^2\left(\|C\|_2^2\Tr V_{xx} + \Tr V_{vv}\right)\left(\sum_{i=1}^N m_i^2p_i^2 \right)^{1/2}\sum_{t=1}^T \frac{1}{\sqrt{t}}.
%\end{multline*}
Substituting $\epsilon_t$ into $\tilde b_G^t$ from Lemma~\ref{lemma:stoch_bound_Gtilde} and the inequality $\sum_{t=1}^T \frac{1}{\sqrt{t}} \leq 2 \sqrt{T}$ completes the proof.
\end{proof}

%\section{Robust Team Decision Problems}
%\label{sec:robust}
%\input{sections/robust}

\section{Numerical example}
\label{sec:example}
In Fig.~\ref{fig:example} we apply the algorithms to \cite[Example 4.1]{Gattami2007Optimal} for two players, where $C_1 = C_2 = 1$, $x, v_1, v_2 \sim \mathcal N(0, 1)$ and
\[
H = \begin{bmatrix} 1 & 0 & 0 \end{bmatrix}^\top, \quad D = \bmat{1 & 1 & 0 \\ 1 & 0 & 1}^\top.
\]
Regret is bounded for Algorithm~\ref{alg:fg_feedback} by $46000(1 + \log(t))$ and for Algorithm~\ref{alg:bandit_feedback} by $1.42\cdot 10^6 \sqrt{T}$. The results from the 1280 simulations\footnote{For a Julia implementation, See~\url{https://github.com/kjellqvist/LearningTeamDecisions.jl}} in Fig.~\ref{fig:example} indicates far better performance.
\begin{figure}
    \vspace{1em}
    \centering
        \pgfplotstableread[col sep=comma]{figures/stats.csv}{\datatable}
    \begin{tikzpicture}
       \begin{axis}[%
            xlabel={Step / $t$},
            ylabel={Regret},
            width={8.5cm},
            height={4cm},
            ymode={log},
            xmode={log},
            ymin = 1,
            xmin = 1,
            axis x line={bottom},
            axis y line={left},
            xminorticks = {false},
            yminorticks = {false},
            legend pos = {south east},
        ]
        \addplot[color=blue, very thick] table[x=t, y=avg_gradient] from \datatable;
        \addlegendentry{Gradient feedback}
        \addplot[color=red, very thick] table[x=t, y=avg_bandit] from \datatable;
        \addlegendentry{Bandit feedback}
        
        \addplot[color=blue!20, very thick, name path= upper_grad] table[x=t, y expr={\thisrow{avg_gradient} + \thisrow{std_gradient}}] from \datatable;
        \addplot[color=blue!20, very thick, name path= lower_grad] table[x=t, y expr={max(1, \thisrow{avg_gradient} - \thisrow{std_gradient})}] from \datatable;
        \addplot[blue!20, opacity=0.4] fill between[of=lower_grad and upper_grad];
        
        \addplot[color=red!10, very thick, name path= upper_bandit] table[x=t, y expr={\thisrow{avg_bandit} + \thisrow{std_bandit}}] from \datatable;
        \addplot[color=red!10, very thick, name path= lower_bandit] table[x=t, y expr={max(1, \thisrow{avg_bandit} - \thisrow{std_bandit})}] from \datatable;
        
        \addplot[red!10, opacity=0.4] fill between[of=lower_bandit and upper_bandit];
       \end{axis}
    \end{tikzpicture}
    \caption{The average (solid lines) $\pm$ one standard deviation (shaded area) from 1280 simulations of Example 4.1 in~\cite{Gattami2007Optimal} using Algorithm~\ref{alg:fg_feedback} (gradient feedback, blue) and Algorithm~\ref{alg:bandit_feedback} (bandit feedback, red).}
    \label{fig:example}
\end{figure}
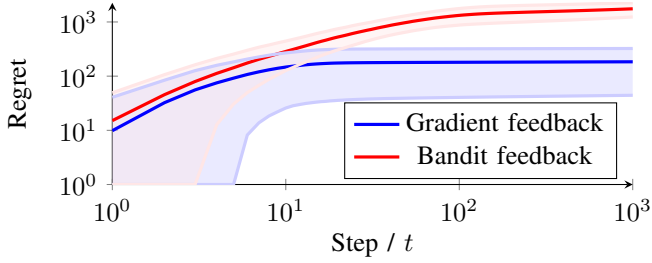
\section{Conclusions and Future Research}
\label{sec:conclusions}
We have proposed algorithms that efficiently learn optimal team decisions in a decentralized manner without knowing the problem parameters. The exploration required with bandit feedback gives worse asymptotic regret, both with respect to time and the number of parameters to be learned. Our work gives a first approach, and there are several interesting open questions to answer. Interesting directions for future research include learning when the covariance matrices change over time, applications to feedback control of dynamical systems, and empirical convergence studies.

\bibliography{oltd}
\bibliographystyle{IEEEtran}
\section{Appendix}

\label{app:rademacher}
\begin{lemma}
\label{lemma:R}
Let $R_i \in \{-1, 1\}^{m_i \times p_i}$ for $i = 1, \ldots, N$ be independent random variables following uniform distributions, and take $R = \Diag\{R_1, R_2, \ldots, R_N\}$. Define $m = m_1 + \cdots + m_N$ and $p = p_1 + \cdot p_N$. Define the set
\begin{multline*}
    \mathcal I_R := \Big\{ (i, k, l) \in \mathbb N^3 : i \in \{1, \ldots, N \}, k \in \{1, \ldots, m_i \}, \\
    l \in \{1, \ldots, p_i \} \Big \}.
\end{multline*}
Let $(i, k, l),\ (i', k', l')$ and $(\hat i, \hat k, \hat l) \in \mathcal I_R$. Them the following  hold
\begin{enumerate}
		\item $\E [R_i(k,l)] = 0$,
		\item $\E [R_i(k,l)R_{i'}(k', l')] = \delta_{(i,k,l) = (i',k',l')}$,
		\item $\E [R_i(k,l)R_{i'}(k',l')R_{\hat i}(\hat k, \hat l)] = 0$,
		\item $\E\left[\Tr (AR^\top)R_i\right] = [A]_i$ for all $A \in \R^{m \times p}$,
		\item $\|R_i\|_F = \sqrt{m_ip_i}$.
\end{enumerate}

\end{lemma}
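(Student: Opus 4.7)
All five properties ultimately reduce to the fact that each entry $R_i(k,l)$ is an independent Rademacher variable with $\E[R_i(k,l)^{2s+1}] = 0$ and $R_i(k,l)^{2s} = 1$ for any integer $s \geq 0$. First I would dispatch the four elementary properties: (1) is immediate from symmetry of the Rademacher distribution; (5) follows because $\|R_i\|_F^2 = \sum_{k,l} R_i(k,l)^2 = m_i p_i$ almost surely. For (2), I would split into cases --- if $(i,k,l) = (i',k',l')$ then $R_i(k,l)^2 = 1$, otherwise the two factors are independent and each has mean zero by (1). For (3), the same case analysis suffices: all three triples distinct gives independence and (1); exactly two coinciding gives a squared Rademacher (equal to $1$) times an independent mean-zero factor; and all three coinciding yields a single Rademacher variable, which has mean zero.

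The only claim that needs real computation is (4). My plan is to use the block-diagonal structure of $R$ to write
\[
\Tr(AR^\top) \;=\; \sum_{s,u} A_{s,u} R_{s,u} \;=\; \sum_{j=1}^N \sum_{k=1}^{m_j}\sum_{l=1}^{p_j} [A]_{jj}(k,l)\, R_j(k,l),
\]
since $R_{s,u} = 0$ unless $(s,u)$ lies in a diagonal block $(j,j)$, in which case it equals $R_j(k,l)$ for the appropriate local indices. Multiplying by $R_i$ and taking expectation, terms with $j \neq i$ vanish by independence between $R_j$ and $R_i$ together with $\E[R_i] = 0$. For $j = i$, the $(k,l)$-entry of $\E[\Tr(AR^\top) R_i]$ becomes $\sum_{k',l'} [A]_{ii}(k',l')\, \E[R_i(k',l') R_i(k,l)]$, which by property (2) collapses to $[A]_{ii}(k,l)$.

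The principal subtlety is not mathematical but notational: the statement of (4) writes $[A]_i$ while dimensional consistency with $R_i \in \R^{m_i \times p_i}$ forces the right-hand side to be the diagonal block $[A]_{ii} \in \R^{m_i \times p_i}$. I would briefly flag that $[A]_i$ in the statement of (4) is to be read as the $(i,i)$ diagonal block of $A$, which is how the identity is used when verifying consistency of the zeroth-order estimator in Lemma~\ref{lemma:stoch_bound_Gtilde}. Beyond this bookkeeping, the proof is mechanical.
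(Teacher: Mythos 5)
Your proof is correct; the paper states Lemma~\ref{lemma:R} in the appendix without any proof, and your elementary entrywise Rademacher argument (independence plus $\E[R_i(k,l)^{2s+1}]=0$, $R_i(k,l)^{2s}=1$) is exactly the intended justification. Your flag on property 4 is also a genuine catch: with the paper's notation $[A]_i$ denotes the $i$th block \emph{row} of $A$, which has the wrong dimensions ($m_i\times p$ rather than $m_i\times p_i$), so the identity must indeed be read as $\E[\Tr(AR^\top)R_i]=[A]_{ii}$, the diagonal block, which is how it is used in establishing consistency of the zeroth-order estimator.
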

%\begin{proof}
%1, 2 and 3 are well-known properties of independent Rademacher variables. To prove 4 consider
%\begin{multline*}
%    \E\left[\Tr (AR^\top)R_i\right] = \E\left[\sum_{k = 1}^{m}\sum_{l=1}^{p}A(k,l)R(k,l) R_i \right]= \\
%    \bmat{
%    \E \left[\sum_{k = 1}^{m}\sum_{l=1}^{p}A(k,l)R(k,l) R_i(1,1)\right] & \cdots & \E \left[\sum_{k = 1}^{m}\sum_{l=1}^{p}A(k,l)R(k,l) R_i(1,p_i)\right] \\
%    \vdots & \ddots & \vdots \\
%    \E \left[\sum_{k = 1}^{m}\sum_{l=1}^{p}A(k,l)R(k,l) R_i(m_i,1)\right] & \cdots & \E \left[\sum_{k = 1}^{m}\sum_{l=1}^{p}A(k,l)R(k,l) R_i(m_i,p_i)\right]
%    } = [A]_i,
%\end{multline*}
%where $M(k,l)$ denotes the element on the $k$:th row and $l$:th column of a matrix $M$. We applied property 2 to conclude that $ \left[\sum_{k = 1}^{m}\sum_{l=1}^{p}A(k,l)R(k,l) %R_i(k,l)\right] = [A]_i(k,l)$. Lastly, 
%\[
%    \|R_i\|_F = \sqrt{\sum_{k, l}(R_i(k, l)^2} = \sqrt{m_i p_i},
%\]
%using $R_i(k, l)^2 = 1$. 
%\end{proof}

\end{document}